\documentclass[11pt,a4paper]{amsart}

\usepackage[a4paper,left=24mm,right=24mm,top=24mm,bottom=26mm]{geometry}

\usepackage[T1]{fontenc}
\usepackage[utf8]{inputenc}
\usepackage[english]{babel}
\usepackage{textcase}
\usepackage{amsmath,amssymb,amsthm,mathtools}
\usepackage{microtype}
\usepackage[hidelinks]{hyperref}
\newtheorem{theorem}{Theorem}[section]
\newtheorem{proposition}[theorem]{Proposition}
\newtheorem{lemma}[theorem]{Lemma}
\newtheorem{corollary}[theorem]{Corollary}
\theoremstyle{remark}
\newtheorem{remark}[theorem]{Remark}

\newcommand{\C}{\mathbb C}
\newcommand{\Z}{\mathbb Z}
\newcommand{\PP}{\mathbb P}
\newcommand{\cO}{\mathcal O}
\newcommand{\ii}{\imath}

\title[HODGE LEVEL AND EFFECTIVE NON-VANISHING]{Hodge Level and Effective Non-vanishing for Smooth Weighted Complete Intersections}
\author{Victor Przyjalkowski}
\address{Steklov Mathematical Institute of Russian Academy of Sciences, 8 Gubkina street, Moscow 119991, Russia}
\email{victorprz@mi-ras.ru, victorprz@gmail.com}
\date{}

\begin{document}

\begin{abstract}
We prove that for every smooth well formed weighted complete intersection of general type of dimension $n>0$, the Hodge number $h^{0,n}(X)$ is positive; in other words, its Hodge level is maximal. We also obtain an explicit lower bound for the geometric genus $p_g(X)$. This implies that the only weighted complete intersection of general type that is not a numerical intersection with a linear cone with $p_g(X)=1$ is $X_{6,6}\subset\PP(1,2,2,3,3)$.
We also prove the Ambro--Kawamata effective non-vanishing conjecture for smooth well formed weighted complete intersections.
\end{abstract}

\maketitle

\section{Introduction}

The Hodge numbers are among the most basic biregular invariants of a smooth projective variety. Following~\cite{Rapoport,Carlson,PSh2020}, we define the \emph{Hodge level} of a smooth projective variety $X$ by
\[
h(X)=\max\{q-p\mid h^{p,q}(X)\ne0\}.
\]

Let $n=\dim X$. By definition, the Hodge level takes its maximal possible value $n$ if and only if the geometric genus $p_g(X)=h^{0,n}(X)$ is nonzero, that is, if and only if $H^0(X,\Omega_X^n)\ne0$.
The Hodge level of any Fano variety is not maximal, since in this case $h^{0,n}(X)=h^0(X,K_X)=0$. On the other hand for a Calabi--Yau variety one has $h^{0,n}(X)=1$, so its Hodge level is maximal. The same clearly holds for curves of positive genus. The Hodge level of a variety of general type need not be maximal even for surfaces: there exist fake projective planes with the same Hodge diamond as $\PP^2$, so their Hodge level is zero~\cite{Mumford,CartwrightSteger}.

The Hodge level of smooth Fano weighted complete intersections was studied in~\cite{PSh2020}. For smooth well formed weighted complete intersections of general type, it was proved in~\cite{Prz2022} that the Hodge level is maximal for complete intersections of Cartier divisors; without the Cartier divisor assumption, the same was proved in codimension~2. The goal of this paper is to prove this for all smooth weighted complete intersections of general type (in other words, to give a positive answer to~\cite[Question 8.6.1]{PSbook}). Related effective non-vanishing results in low codimension were obtained in~\cite{Passantino}.

We prove the following.

\begin{theorem}\label{thm:main}
Let $X$ be a smooth well formed weighted complete intersection of general type of dimension $n>0$. Then
\[
h^{0,n}(X)>0.
\]
In particular, the Hodge level of $X$ is maximal:
\[
h(X)=n.
\]
\end{theorem}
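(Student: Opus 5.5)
Write $X=X_{d_1,\dots,d_c}\subset\PP=\PP(a_0,\dots,a_N)$ with $N=n+c$. Since $X$ is smooth and well formed, adjunction gives $K_X=\cO_X(i_X)$ with
\[
i_X=\sum_j d_j-\sum_k a_k ,
\]
and $X$ is of general type exactly when $i_X>0$. By the standard description of the cohomology of quasismooth weighted complete intersections, $H^0(X,\cO_X(m))$ equals the degree-$m$ part of the graded ring $\C[x_0,\dots,x_N]/(f_1,\dots,f_c)$ for every $m\ge 0$ (the $f_j$ form a regular sequence). Hence
\[
p_g(X)=h^0(X,K_X)=\dim\bigl(\C[x_0,\dots,x_N]/(f_1,\dots,f_c)\bigr)_{i_X}.
\]
This is the coefficient of $t^{i_X}$ in
\[
P(t)=\prod_j(1-t^{d_j})\Big/\prod_k(1-t^{a_k}).
\]
So the theorem reduces to a purely combinatorial claim: for every admissible data (weights and degrees) of a smooth well formed weighted complete intersection with $i_X>0$, this coefficient is positive. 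Equivalently, there is a monomial of weighted degree $i_X$ in the $x_k$ that does not lie in the ideal generated by the $f_j$.

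The plan is to show that the degree-$i_X$ part of the quotient ring contains a monomial in a set of variables that is ``free'' modulo the equations. We also may assume $X$ is not an intersection with a linear cone, which means all $d_j\neq a_k$.

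First I would invoke the known arithmetic constraints of smoothness. For every prime $p$ and every subset of weights, the number of $a_k$ divisible by $p$ is at most the number of $d_j$ divisible by $p$; more precisely, for every $q$, if $k$ weights are divisible by $q$, then at least $k$ of the degrees are divisible by $q$ (and more refined versions for gcd's). Using these, I would construct an injection $\sigma:\{1,\dots,c\}\to\{0,\dots,N\}$ such that $a_{\sigma(j)}\mid d_j$. This is the standard matching of degrees with weights, via Hall's marriage theorem applied to the divisibility conditions. Then the $c$ monomials $x_{\sigma(j)}^{d_j/a_{\sigma(j)}}$ can be taken as the leading terms of the $f_j$ after a general change of coordinates, giving a Gröbner degeneration. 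The quotient ring then degenerates to
\[
\C[x_0,\dots,x_N]\big/\bigl(x_{\sigma(j)}^{d_j/a_{\sigma(j)}}\bigr),
\]
which has the same Hilbert series $P(t)$.

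In this monomial quotient, the degree-$i_X$ part has a basis of monomials in which each $x_{\sigma(j)}$ has exponent below $d_j/a_{\sigma(j)}$, while the remaining $n+1$ variables are free. So it suffices to find nonnegative integers $e_j\le d_j/a_{\sigma(j)}-1$ and $m_k\ge0$ (for $k\notin\operatorname{im}\sigma$) with
\[
\sum_j e_j a_{\sigma(j)}+\sum_k m_k a_k=i_X .
\]
The maximal value of the first sum is $\sum_j (d_j-a_{\sigma(j)})$, and $i_X$ equals this maximum minus the sum of the $n+1$ unmatched weights. One natural choice is to take $e_j$ maximal and then remove the unmatched weights. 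The subtraction cannot be done directly, so I would instead look for a representation via a Frobenius-type argument: the numbers representable by the first sum form a box of multiples. Combined with the gcd conditions from smoothness (for instance, that every unmatched weight is coprime in the appropriate sense, and that any $n+1$ of the weights are coprime by well-formedness), this should allow us to fill the deficit.

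The main obstacle is this last representability step. The unmatched weights may share factors, and $i_X$ can be small, down to $1$. For $i_X=1$ we need a weight equal to $1$ among those with available exponent room. Well-formedness together with smoothness should force some $a_k=1$ when $n\ge1$, since a smooth point on a coordinate stratum requires coprimality, and I would prove this first. In general I would try induction on $c$: choose the matching so that $d_j/a_{\sigma(j)}\ge2$ whenever possible, use those matched variables to adjust residues modulo the gcd of the unmatched weights, and handle the remaining cases, where $d_j=2a_{\sigma(j)}$ and the weights are highly non-coprime, by direct analysis of the divisibility conditions.
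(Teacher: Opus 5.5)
Your reduction to $A_{\iota_X}\neq0$, i.e.\ to positivity of the coefficient of $t^{\iota_X}$ in $\prod_j(1-t^{d_j})/\prod_i(1-t^{a_i})$, is exactly the paper's starting point. The combinatorial part, however, has two genuine gaps.

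\textbf{The matching does not exist in general.} Proposition~\ref{prop:divisibility} only gives, for each $\delta>1$, the count $\#\{i:\delta\mid a_i\}\le\#\{j:\delta\mid d_j\}$. This limits how many weights share a factor. It gives no lower bound on the number of weights dividing a given degree, so Hall's condition in the direction degrees$\to$weights does not follow. A concrete counterexample is a general $X_{29,210}\subset\PP(2,3,5,7)$:
\begin{itemize}
\item $29$ lies in the semigroup generated by any two of the weights, so the base locus of the degree-$29$ polynomials on $\C^4$ is the union of the coordinate axes.
\item A general degree-$210$ equation contains $x^{105},y^{70},z^{42},w^{30}$, so it does not vanish on the axes away from the origin.
\item Bertini therefore gives quasismoothness, and $X$ misses the four singular points of $\PP(2,3,5,7)$.
\end{itemize}
So $X$ is a smooth well formed curve of general type ($\iota_X=222$), not an intersection with a linear cone, yet $29$ is divisible by no weight. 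Your monomial model $\C[x]/\bigl(x_{\sigma(j)}^{d_j/a_{\sigma(j)}}\bigr)$ is unavailable here. (Where a matching does exist, no Gr\"obner degeneration is needed: the pure powers themselves form a regular sequence of the same degrees, hence give the same Hilbert series.)

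\textbf{The representability step is missing, and its proposed ingredients are false.} Even granting the matching, this step is the heart of the matter, and you only sketch strategies for it. The auxiliary facts you plan to use do not hold. A smooth well formed $X$ with $n\ge1$ need not have a weight equal to $1$. Also, $n+1$ of its weights need not be coprime. A general $X_{6,6}\subset\PP(2,2,3,3)$ is a smooth curve of genus $2$ that refutes both: it has no weight $1$, and the weights $2,2$ are not coprime.

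The paper avoids both difficulties. It uses only the count inequalities $v_m\ge u_m$ to show that $Q(t)=\prod_jU_{d_j}/\prod_iU_{a_i}$ is a product of cyclotomic polynomials with nonnegative exponents. It then proves analytically that $Q(t)/(1-t)^{\rho}$ has positive coefficients in all degrees $\ge\deg Q-\rho$ for $2\le\rho<\deg Q$. The steps are:
\begin{itemize}
\item write $Q=(1+t)^{c_2}\prod_\nu\bigl((1-t)^2+\lambda_\nu t\bigr)$, where $\prod_\nu\lambda_\nu$ is a positive integer;
\item use Maclaurin's inequality to reduce to $(1+t)^{c_2}(1-t+t^2)^r$;
\item finish with an explicit case analysis.
\end{itemize}
Some global, non-monomial argument of this kind is what your proposal lacks.
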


\begin{remark}
The smoothness assumption in Theorem~\ref{thm:main} is essential. It cannot be replaced by quasismoothness: by~\cite[Proposition~4.3]{Prz2022}, for every $n>2$ there exists an $n$-dimensional quasismooth well formed weighted complete intersection of general type such that $h^{0,n}(X)=0$.
\end{remark}

Theorem~\ref{thm:main} yields explicit lower bounds for the geometric genus $p_g(X)$; they are stated in Corollary~\ref{cor:pgbound}. Corollary~\ref{cor:pgone} shows that if $X$ is not a numerical intersection with a linear cone, then $p_g(X)=1$ if and only if $X=X_{6,6}\subset\PP(1,2,2,3,3)$; in particular, in all other cases $p_g(X)\ge2$.

We also prove the Ambro--Kawamata effective non-vanishing conjecture for smooth well formed weighted complete intersections; see Corollary~\ref{cor:AmbroKawamata}.

The idea of the proof of Theorem~\ref{thm:main} is the following. By~\cite[Proposition~3.8.1]{PSbook}, smoothness imposes numerical restrictions on the weights and degrees of $X$. These conditions imply that the rational function
\[
Q(t)=\frac{\prod_j(1+t+\cdots+t^{d_j-1})}
          {\prod_i(1+t+\cdots+t^{a_i-1})}
\]
is a product of cyclotomic polynomials with nonnegative exponents. On the other hand, the Hilbert series of its homogeneous coordinate ring is
\[
\frac{Q(t)}{(1-t)^{n+1}}.
\]
The key point is to establish the positivity of the coefficients of this series starting from a specific one. In particular, the positivity of the first of these coefficients proves the theorem, while the stronger statement is used for effective non-vanishing.

\medskip

Section~2 contains the necessary notation and preliminary results. In Section~3 we construct the cyclotomic numerator of the Hilbert series. Section~4 proves Theorem~\ref{thm:main}, and Section~5 collects its consequences.

\section{Preliminaries}

We recall the necessary notation and results; see~\cite{PSbook} for details.

Let
\[
X\subset \PP(a_0,\ldots,a_N)
\]
be a smooth well formed weighted complete intersection of general type of multidegree $(d_1,\ldots,d_k)$ and dimension
\[
n=N-k>0.
\]
Set
\[
\iota_X=\sum_{j=1}^k d_j-\sum_{i=0}^N a_i.
\]
By the adjunction formula~\cite[Theorem~5.3.2]{PSbook}, we have $\omega_X\simeq \cO_X(\iota_X)$, so $X$ is of general type if and only if $\iota_X>0$.

Set
\[
S=\C[x_0,\ldots,x_N],\qquad \deg x_i=a_i,
\]
and let $f_1,\ldots,f_k$ be a regular sequence defining $X$, with $\deg f_j=d_j$. Set
\[
A=S/(f_1,\ldots,f_k)=\bigoplus_{m\ge0} A_m.
\]

By the adjunction formula and the description of global sections~\cite[Corollary~5.1.13(i)]{PSbook}, we have
\begin{equation}\label{eq:hodgegraded}
h^{0,n}(X)=h^0(\omega_X)=h^0(\cO_X(\iota_X))=\dim_{\C}A_{\iota_X}.
\end{equation}
Therefore, to prove Theorem~\ref{thm:main} it suffices to show that $A_{\iota_X}\ne0$.

Finally, smoothness of $X$ implies the following.

\begin{proposition}[{[PSh26, Proposition~3.8.1]}]\label{prop:divisibility}
Suppose that $r$ weights $a_i$ have a common divisor $\delta>1$. Then there exist $r$ distinct degrees $d_j$ divisible by $\delta$.
\end{proposition}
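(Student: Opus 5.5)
The plan is to show that $X$ cannot meet the locus of points of $\PP=\PP(a_0,\ldots,a_N)$ whose stabilizer contains $\mu_\delta$, and then to count the equations that can cut this locus out. Let $I=\{i \mid \delta\mid a_i\}$, so $|I|\ge r$. Consider the linear stratum
\[
\Pi_\delta=\{x_i=0 \text{ for all } i\notin I\}\simeq \PP(a_i\mid i\in I),
\]
which has dimension $|I|-1\ge r-1$. Every point of $\Pi_\delta$ has a lift to $\C^{N+1}\setminus\{0\}$ that is fixed by $\mu_\delta\subset\C^*$, so its stabilizer is nontrivial.

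\emph{Step 1: restricting the equations to $\Pi_\delta$.} If $\delta\nmid d_j$, then every monomial of degree $d_j$ contains some variable $x_i$ with $i\notin I$. Indeed, a monomial in the variables $x_i$, $i\in I$, only has degree divisible by $\delta$. Hence $f_j|_{\Pi_\delta}\equiv0$ for all such $j$. Therefore
\[
X\cap\Pi_\delta=\{f_j=0 \mid \delta\mid d_j\}\cap\Pi_\delta .
\]
If $s$ denotes the number of indices $j$ with $\delta\mid d_j$, this intersection is cut out in the projective variety $\Pi_\delta$ of dimension $|I|-1$ by $s$ equations. So it is nonempty as soon as $s<|I|$, and in particular whenever $s<r$.

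\emph{Step 2: smooth well formed $X$ avoids points with nontrivial stabilizer.} Take $p\in X$ with a lift $\tilde p$ in the affine cone $C_X\subset\C^{N+1}$, and let $G\subset\C^*$ be the stabilizer of $\tilde p$. Locally analytically near $p$, $X$ is isomorphic to $V/G$, where $V$ is a slice of $C_X$ through $\tilde p$ transverse to the $\C^*$-orbit. We use two facts. First, smoothness together with well-formedness gives quasismoothness (as in~\cite{PSbook}), so $V$ is smooth and $G$ acts linearly on $T_{\tilde p}V$. Second, well-formedness says that $X\cap\operatorname{Sing}\PP$ has codimension at least $2$ in $X$; this forces the action of $G$ on $V$ to contain no quasi-reflections, because the fixed locus of a quasi-reflection would give a divisor of $X$ consisting of points with nontrivial stabilizer. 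By the Chevalley--Shephard--Todd theorem (the purity form for small groups), $V/G$ is smooth only if $G$ is trivial. Since $X$ is smooth, every point of $X$ has trivial stabilizer, and in particular $X\cap\Pi_\delta=\emptyset$.

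\emph{Step 3: conclusion.} Combining the two steps gives $s\ge|I|\ge r$. This means there are at least $r$ indices $j$, that is, $r$ distinct equations counted as entries of the multidegree, with $\delta\mid d_j$.

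The main obstacle is Step 2. The passage from smooth to quasismooth, and the verification that well-formedness excludes quasi-reflections in the local stabilizer action, both need care. For these I would rely on the local description of weighted complete intersections near points with nontrivial stabilizer in~\cite{PSbook}, rather than reprove them from scratch.
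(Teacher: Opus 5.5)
The paper does not prove this statement; it quotes it from [PSh26, Proposition~3.8.1]. Your argument is the standard proof of that result, and it is correct. You restrict to the stratum $\Pi_\delta$ and note that every $f_j$ with $\delta\nmid d_j$ vanishes identically there. Hence $X\cap\Pi_\delta\neq\emptyset$ unless at least $|I|\ge r$ degrees are divisible by $\delta$. You then use that a smooth well formed weighted complete intersection contains no point with nontrivial stabilizer.

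Two small points in Step~2 should be made explicit. First, you use that every point of $\PP$ with nontrivial stabilizer lies in $\operatorname{Sing}\PP$, which holds because $\PP$ is well formed. This is what turns ``a divisor of $X$ consisting of points with nontrivial stabilizer'' into a contradiction with $\operatorname{codim}_X(X\cap\operatorname{Sing}\PP)\ge2$. Second, you need $G$ to act faithfully on $V$. Otherwise the Chevalley--Shephard--Todd argument only shows that the image of $G$ in $\operatorname{Aut}(V)$ is trivial, and $V/G$ can be smooth with $G\ne1$. Faithfulness follows from the same reasoning: a nontrivial element acting trivially on $V$ would put a whole neighbourhood of $p$ in $X$ into $\operatorname{Sing}\PP$.

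You can also avoid relying on ``smooth and well formed implies quasismooth'', which is the least transparent input.
\begin{itemize}
\item Over the free locus, $C_X\setminus\{0\}$ is a principal $\C^*$-bundle over the smooth $X$, so $\operatorname{Sing}C_X$ has codimension at least~$2$. Since $C_X$ is a complete intersection, Serre's criterion makes it normal.
\item Then $V\to V/G$ is a finite map from a normal germ onto a smooth one, \'etale off a subset of codimension at least~$2$.
\item Zariski--Nagata purity forces this map to be \'etale at the $G$-fixed point $\tilde p$, which gives $G=1$ directly.
\end{itemize}
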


\section{Hilbert series and the cyclotomic numerator}

Consider the Hilbert series
\[
\sum_{m\ge0}\dim_{\C}A_m\,t^m
\]
of the graded algebra $A$.
Since the polynomials $f_1,\ldots,f_k$ form a regular sequence
\begin{equation}\label{eq:hilb}
\sum_{m\ge0}\dim_{\C}A_m\,t^m=
\frac{\prod_{j=1}^k(1-t^{d_j})}{\prod_{i=0}^N(1-t^{a_i})}
\end{equation}
by~\cite[Lemma~A.2.15]{PSbook}.

For a positive integer $b$, set
\[
U_b(t)=1+t+\cdots+t^{b-1}=\frac{1-t^b}{1-t},
\]
and consider the rational function
\begin{equation}\label{eq:Qdef}
Q(t)=\frac{\prod_{j=1}^k U_{d_j}(t)}{\prod_{i=0}^N U_{a_i}(t)}.
\end{equation}
Let $\Phi_m(t)$ denote the $m$-th cyclotomic polynomial. In particular,
\begin{equation}\label{eq:Ubcycl}
U_b(t)=\prod_{\substack{m\mid b\\m>1}}\Phi_m(t).
\end{equation}

\begin{lemma}\label{lem:Qpoly}
The function $Q(t)$ is a polynomial with integer coefficients and has the form
\[
Q(t)=\prod_{m\ge2}\Phi_m(t)^{c_m},\qquad c_m\in\Z_{\ge0}.
\]
\end{lemma}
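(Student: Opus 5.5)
By \eqref{eq:Ubcycl}, $Q(t)$ factors as a product of cyclotomic polynomials with integer exponents. Explicitly,
\[
Q(t)=\prod_{m\ge2}\Phi_m(t)^{c_m},\qquad c_m=\#\{j\mid m\mid d_j\}-\#\{i\mid m\mid a_i\}.
\]
This holds because each $U_{d_j}$ contributes one factor $\Phi_m$ for every $m>1$ dividing $d_j$, and each $U_{a_i}$ removes one such factor. Since cyclotomic polynomials are distinct irreducible monic polynomials in $\Z[t]$, this factorization is unique. So the whole statement comes down to showing $c_m\ge0$ for every $m\ge2$. Once that holds, $Q(t)$ is a product of monic integer polynomials, hence a polynomial with integer coefficients of the claimed form.

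To prove $c_m\ge0$, fix $m\ge2$ and let $r=\#\{i\mid m\mid a_i\}$. If $r=0$ there is nothing to prove. Otherwise the $r$ weights divisible by $m$ have $m>1$ as a common divisor, so Proposition~\ref{prop:divisibility} with $\delta=m$ gives $r$ distinct degrees $d_j$ divisible by $m$.

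One point needs care: Proposition~\ref{prop:divisibility} says ``distinct degrees''. We have to read this as $r$ distinct indices $j$, not $r$ distinct values. Under that reading, $\#\{j\mid m\mid d_j\}\ge r$, so $c_m\ge0$. This is the standard formulation in \cite{PSbook}, where the conclusion is that $r$ of the equations $f_j$ have degree divisible by $\delta$, and equal degrees are counted with multiplicity. I would state this interpretation explicitly; it is the only step that needs attention. Everything else is bookkeeping of cyclotomic factors via \eqref{eq:Ubcycl}.

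Finally, it is worth noting that $Q(1)=\prod d_j/\prod a_i$, consistent with the factorization. This is not needed for the lemma.
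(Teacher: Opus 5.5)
Your proposal is correct and follows essentially the same route as the paper: write $Q(t)=\prod_{m\ge2}\Phi_m(t)^{v_m-u_m}$ via \eqref{eq:Ubcycl}, then use Proposition~\ref{prop:divisibility} to get $v_m\ge u_m$. The paper applies the proposition with $\delta$ equal to the gcd of the weights divisible by $m$ rather than $\delta=m$ directly, which makes no difference. Your reading of ``distinct degrees'' as distinct indices is the one the paper implicitly uses, and it is needed for $X_{6,6}\subset\PP(1,2,2,3,3)$, where two weights are divisible by $2$ and $d_1=d_2=6$.
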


\begin{proof}
For $m\ge2$, set
\[
u_m=\#\{i\mid m\mid a_i\},\qquad
v_m=\#\{j\mid m\mid d_j\}.
\]
If $u_m>0$, consider all weights divisible by $m$. Their greatest common divisor is also divisible by $m$. By Proposition~\ref{prop:divisibility}, there exist $u_m$ distinct degrees, each divisible by this greatest common divisor and hence by $m$. Therefore $v_m\ge u_m$.
From~\eqref{eq:Ubcycl} we obtain
\[
Q(t)=\prod_{m\ge2}\Phi_m(t)^{v_m-u_m}\in\Z[t],
\]
which completes the proof.
\end{proof}

It follows from~\eqref{eq:hilb} and~\eqref{eq:Qdef} that
\begin{equation}\label{eq:hilbQ}
\sum_{m\ge0}\dim_{\C}A_m\,t^m=\frac{Q(t)}{(1-t)^{n+1}}.
\end{equation}
The degree of $Q$ is
\begin{equation}\label{eq:q}
q=\sum_{j=1}^k(d_j-1)-\sum_{i=0}^N(a_i-1)
=\iota_X+n+1.
\end{equation}
In particular,
\begin{equation}\label{eq:rhoq}
2\le n+1<q,
\end{equation}
since $n>0$ and $\iota_X>0$.

\section{Proof of Theorem 1.1}

To prove Theorem~\ref{thm:main}, we will use the following lemma.

Let
\[
Q(t)=\prod_{m\ge2}\Phi_m(t)^{c_m},\qquad c_m\in\Z_{\ge0},
\]
and let $q=\deg Q$. For an integer $\rho$ satisfying
\[
2\le\rho<q,
\]
define the coefficients $c_j(Q,\rho)$ by
\begin{equation}\label{eq:cdef}
\frac{Q(t)}{(1-t)^\rho}=\sum_{j\ge0}c_j(Q,\rho)t^j.
\end{equation}

\begin{lemma}\label{lem:coeff}

One has
\[
c_j(Q,\rho)>0\qquad\text{for every }j\ge q-\rho.
\]

\end{lemma}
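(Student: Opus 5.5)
The plan is to treat $c_j(Q,\rho)$ as the values of a Hilbert polynomial and to locate the complex zeros of that polynomial. Write $Q(t)=\sum_{i=0}^q Q_it^i$. Then
\[
c_j(Q,\rho)=\sum_{i=0}^{\min(j,q)}Q_i\binom{j-i+\rho-1}{\rho-1}.
\]
Let
\[
P(x)=\sum_{i=0}^q Q_i\binom{x-i+\rho-1}{\rho-1}
\]
be the corresponding polynomial of degree $\rho-1$ in $x$.

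The polynomial $\binom{x-i+\rho-1}{\rho-1}$ vanishes at $x-i\in\{-1,\dots,-(\rho-1)\}$. Hence the terms with $i>j$ contribute nothing whenever $j-i\ge-(\rho-1)$ for all $i\le q$, that is, for $j\ge q-\rho+1$. So
\[
c_j(Q,\rho)=P(j)\quad\text{for } j\ge q-\rho+1.
\]
At the boundary value $j=q-\rho$ only the term $i=q$ misbehaves. There $\binom{-1}{\rho-1}=(-1)^{\rho-1}$, and $Q_q=1$ because $Q$ is a product of monic palindromic cyclotomic polynomials. This gives
\[
c_{q-\rho}(Q,\rho)=P(q-\rho)-(-1)^{\rho-1}.
\]
The leading coefficient of $P$ is $Q(1)/(\rho-1)!$, and $Q(1)=\prod_m\Phi_m(1)^{c_m}>0$.

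Next I would exploit the symmetry. Every $\Phi_m$ with $m\ge2$ is palindromic, so $t^qQ(1/t)=Q(t)$. Comparing $Q(t)/(1-t)^\rho$ with its image under $t\mapsto1/t$ yields the functional equation
\[
P(x)=(-1)^{\rho-1}P(q-\rho-x),
\]
so $P$ is symmetric or antisymmetric about $x_0=(q-\rho)/2$.

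The key input is a Rodriguez-Villegas-type statement: if all zeros of $Q$ lie on the unit circle and none equals $1$, then all complex zeros of $P$ lie on the line $\operatorname{Re}x=x_0$. I would prove this by induction on the factors of $Q$, using a linear-root factorization of each $\Phi_m$. Write
\[
1-\zeta t=(1-t)\bigl(1+(1-\zeta)t/(1-t)\bigr).
\]
In terms of polynomials, multiplying the series by $(1-\zeta t)$ sends $P(x)$ to $P(x)-\zeta P(x-1)$. A Pólya-type lemma then does the work: if all zeros of $P$ lie on a vertical line and $|\zeta|=1$, then the zeros of $P(x)-\zeta P(x-1)$ lie on the line shifted by $1/2$. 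The reason is that for $x$ strictly right of the shifted line, $|P(x)|>|P(x-1)|$, which follows by comparing distances to each root factor by factor. To start the induction, take $Q=1$ with exponent $\rho+\deg Q$; then $P(x)=\binom{x+\rho+q-1}{\rho+q-1}$, whose roots are real and symmetric. Each complex root $\zeta$ reduces the exponent by one, and I would apply the lemma to the combined operation.

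Granting the root location, $P$ has no real zero with $x>x_0$, so $P(x)>0$ for $x>x_0$. Since $q-\rho>0$, every $j\ge q-\rho+1$ satisfies $j>x_0$, and therefore $c_j(Q,\rho)=P(j)>0$.

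I expect two obstacles. The first is the root-location lemma, which is the heart of the argument. The second is the boundary case $j=q-\rho$, where I need $P(q-\rho)>(-1)^{\rho-1}$. By the functional equation, $P(q-\rho)=(-1)^{\rho-1}P(0)$, and $P(0)=c_0+\text{(correction)}=1+\dots$ by the same vanishing analysis. So this case reduces to an inequality for $P(0)$, which I would try to settle by a direct root-distance estimate at the integer point $q-\rho\ge x_0+1/2$. I would also try to obtain it from $c_{q-\rho}(Q,\rho)=\sum_{l\le q-\rho}c_l(Q,\rho-1)$, combined with the already-established positivity for $\rho-1$ when $\rho\ge3$. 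The case $\rho=2$ needs a separate direct check.
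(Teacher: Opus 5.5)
Your Hilbert-polynomial approach differs from the paper's, but it has a genuine gap at $j=q-\rho$, the very index Theorem~1.1 needs ($\iota_X=q-\rho$ with $\rho=n+1$). The root-location step is also not proved as sketched.

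\textbf{The boundary index for odd $\rho$.} Your own formula gives $c_{q-\rho}(Q,\rho)=P(q-\rho)+(-1)^{\rho}$. For even $\rho$, root location does suffice, since $P(q-\rho)>0$. For odd $\rho$, however, $c_{q-\rho}=P(q-\rho)-1$. Root location only yields that the integer $P(q-\rho)$ is positive, so you get $c_{q-\rho}\ge0$, not $>0$.

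No argument that uses only the unimodularity of the zeros of $Q$ can close this. Take $Q(t)=\bigl((1-t)^2+\lambda t\bigr)^2$ with $0<\lambda<4$, and $\rho=3$, $q=4$. All zeros of $Q$ lie on the unit circle and $P$ has the predicted zero location. Yet $c_1(Q,3)=2\lambda-1\le0$ for $\lambda\le1/2$.

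The missing ingredient is arithmetic. The paper uses that $\prod_\nu\lambda_\nu=Q(1)/2^{c_2}$ is a positive integer. It combines this with Maclaurin's inequality and with the monotonicity of $c_j$, for $j\ge q-\rho$, in the $e_v$. This reduces everything to the explicit polynomial $Q_*=(1+t)^{c_2}(1-t+t^2)^r$.

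Neither of your fallbacks supplies this:
\begin{itemize}
\item The root-distance estimate gives only $P(q-\rho)\ge Q(1)\bigl((q-\rho)/2\bigr)^{\rho-1}/(\rho-1)!$, which is typically far below $2$.
\item In $c_{q-\rho}(Q,\rho)=\sum_{l\le q-\rho}c_l(Q,\rho-1)$, every index satisfies $l<q-(\rho-1)$. So the $(\rho-1)$-case of the lemma says nothing about these terms.
\end{itemize}
Also, $\rho=2$ is not the special case here; the difficulty arises for every odd $\rho$.

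\textbf{The root-location step.} Your induction does not start. The base polynomial $\binom{x+\rho+q-1}{\rho+q-1}$ has the real zeros $-1,\dots,-(\rho+q-1)$, which do not lie on one vertical line, so your P\'olya-type lemma cannot be applied even once. Applied to zeros lying in a vertical strip, the same comparison only translates the strip. Starting from $(x+1)\cdots(x+\rho-1)$, it would only place the zeros of $P$ in $x_0-(\rho-2)/2\le\operatorname{Re}x\le x_0+(\rho-2)/2$, which is too weak.

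The statement itself is true (it is a theorem of Rodriguez-Villegas). It can be proved by splitting off the forced real zeros. Write $E^{-1}$ for the shift $P(x)\mapsto P(x-1)$. After applying $m\le\rho-1$ of the factors $1-\zeta_kE^{-1}$ to $(x+1)\cdots(x+\rho-1)$, the result has the form $(x+1)\cdots(x+\rho-1-m)\,\tilde H_m(x)$, where
\[
\tilde H_{m+1}(x)=(x+\rho-1-m)\tilde H_m(x)-\zeta_{m+1}\,x\,\tilde H_m(x-1).
\]
Your distance comparison does apply to this recursion. It puts the zeros of $\tilde H_m$ on $\operatorname{Re}x=-(\rho-m)/2$, and the remaining factors then move them to $\operatorname{Re}x=x_0$.

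Even after this repair, your argument covers only $j>q-\rho$, together with $j=q-\rho$ for even $\rho$. The odd-$\rho$ boundary case, which is exactly $p_g(X)>0$ for even-dimensional $X$, remains unproved.
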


\begin{proof}
Set
$\ii=q-\rho>0$.
The exponent of the factor $\Phi_2(t)=1+t$ in $Q$ is $c_2$. All the remaining roots of $Q$ split into complex-conjugate pairs $e^{\pm i\theta_\nu}$. Hence
\begin{equation}\label{eq:factor}
Q(t)=(1+t)^{c_2}\prod_{\nu=1}^{r}\bigl((1-t)^2+\lambda_\nu t\bigr),
\qquad
\lambda_\nu=2(1-\cos\theta_\nu)>0,
\end{equation}
where $q=c_2+2r$.

If $r=0$, then $Q(t)=(1+t)^q$. The polynomial $(1+t)^q$ has nonnegative coefficients and nonzero constant term, while the series $(1-t)^{-\rho}$ has strictly positive coefficients. Hence all coefficients of their product are strictly positive, and the assertion follows. Thus we may assume that $r\ge1$.

We have
\[
\frac{Q(t)}{(1+t)^{c_2}}\in\Z[t].
\]
Substituting $t=1$ into~\eqref{eq:factor}, we obtain
\[
\prod_{\nu=1}^{r}\lambda_\nu
=
\left.\frac{Q(t)}{(1+t)^{c_2}}\right|_{t=1}
\in\Z_{>0}.
\]
In particular,
\begin{equation}\label{eq:lambdaprod}
\prod_{\nu=1}^{r}\lambda_\nu\ge1.
\end{equation}

Let $e_v$ be the $v$-th elementary symmetric function in $\lambda_1,\ldots,\lambda_r$, with $e_0=1$. %For $1\le v\le r$, set
By Maclaurin's inequality (see, for example,~\cite[Theorem~52]{HLP})
and by~\eqref{eq:lambdaprod}
one has
\begin{equation}\label{eq:Ev}
e_v\ge \binom rv
\left(\prod_{\nu=1}^{r}\lambda_\nu\right)^{v/r}
\ge\binom rv.
\end{equation}

Since $1+t=(1-t)+2t$,
\begin{equation}\label{eq:expand1}
(1+t)^{c_2}
=\sum_{u=0}^{c_2}\binom{c_2}{u}2^u t^u(1-t)^{c_2-u}.
\end{equation}
Moreover,
\begin{equation}\label{eq:expand2}
\frac{Q(t)}{(1+t)^{c_2}}
=\prod_{\nu=1}^{r}\bigl((1-t)^2+\lambda_\nu t\bigr)
=\sum_{v=0}^{r}e_v t^v(1-t)^{2(r-v)}.
\end{equation}
Multiplying the corresponding sides of~\eqref{eq:expand1} and~\eqref{eq:expand2} and dividing by $(1-t)^\rho$, we obtain
\begin{equation}\label{eq:double}
\frac{Q(t)}{(1-t)^\rho}
=
\sum_{u=0}^{c_2}\sum_{v=0}^{r}
\binom{c_2}{u}2^u e_v\,
 t^{u+v}(1-t)^{\ii-u-2v}.
\end{equation}

Fix $j\ge\ii$. For $v\ge1$, the contribution of each summand on the right-hand side of~\eqref{eq:double} to the coefficient of $t^j$ is nonnegative. Indeed, if $\ii-u-2v\ge0$, then this summand is a polynomial of degree
\[
(u+v)+(\ii-u-2v)=\ii-v<\ii\le j
\]
and therefore contributes nothing. If $\ii-u-2v<0$, then the factor $(1-t)^{\ii-u-2v}$ expands as a formal power series with nonnegative coefficients. Consequently, for every $j\ge\ii$, the coefficient $c_j(Q,\rho)$, viewed as a linear function of $e_1,\ldots,e_r$, is nondecreasing in each $e_v$.

Set
\[
Q_*(t)=(1+t)^{c_2}(1-t+t^2)^r
\]
and, analogously to~(8), define $c_j(Q_*,\rho)$ by
\begin{equation}\label{eq:Qstarseries}
\frac{Q_*(t)}{(1-t)^\rho}
=\sum_{j\ge0}c_j(Q_*,\rho)t^j.
\end{equation}

Using~\eqref{eq:Ev}, we may replace $e_v$ by $\binom rv$ in the right-hand side of~\eqref{eq:double}. We obtain the series~\eqref{eq:Qstarseries}; hence
\[
c_j(Q,\rho)\ge c_j(Q_*,\rho)
\qquad\text{for every }j\ge\ii.
\]
Thus it suffices to prove that
\begin{equation}\label{eq:targetstar}
c_j(Q_*,\rho)>0
\qquad\text{for every }j\ge\ii.
\end{equation}

Set
\[
\Psi(t)=1-t+t^2,
\]
so that
\begin{equation}\label{eq:Psi}
(1+t)\Psi(t)=1+t^3.
\end{equation}

Assume that
$c_2\ge r$.  %}
Then from~\eqref{eq:Psi} it follows that
\[
Q_*(t)=(1+t)^{c_2-r}(1+t^3)^r.
\]

This polynomial has nonnegative coefficients and constant term $1$. Since the series $(1-t)^{-\rho}$ has strictly positive coefficients, every coefficient of $Q_*(t)/(1-t)^\rho$ is strictly positive. Hence~\eqref{eq:targetstar} holds in this case.

Thus we may assume that $c_2<r$. %}
Set
\[
M=r-c_2>0.
\]
By~\eqref{eq:Psi}, we have
\begin{equation}\label{eq:QstarPsi}
Q_*(t)=(1+t^3)^{c_2}\Psi(t)^M.
\end{equation}

First suppose that $\rho\ge M$. Then
\begin{equation}\label{eq:rhoM}
\frac{Q_*(t)}{(1-t)^\rho}
=(1+t^3)^{c_2}
\left(\frac{\Psi(t)}{1-t}\right)^M
(1-t)^{-(\rho-M)}.
\end{equation}
The series
\[
\frac{\Psi(t)}{1-t}
=1+\frac{t^2}{1-t}
=1+t^2+t^3+t^4+\cdots
\]
has nonnegative coefficients. If $\rho>M$, then the last factor on the right-hand side of~\eqref{eq:rhoM} has strictly positive coefficients in every degree, and~\eqref{eq:targetstar} follows immediately.

Let $\rho=M$. Then
\[
\ii=q-\rho=c_2+2r-M=3c_2+M.
\]
For every $j\ge\ii$, choose the term $t^{3c_2}$ from $(1+t^3)^{c_2}$. The remaining degree satisfies
\[
j-3c_2\ge M\ge2.
\]
Since $1+t^2+t^3+\cdots$ has a positive coefficient in every degree at least $2$, the coefficient of $t^{j-3c_2}$ in $(1+t^2+t^3+\cdots)^M$ is positive. Hence~\eqref{eq:targetstar} also holds when $\rho=M$.

It remains to consider the case
\begin{equation}\label{eq:hardcase}
2\le\rho<M.
\end{equation}
Set
\[
B(t)=\frac{\Psi(t)^M}{(1-t)^\rho}
=\sum_{j\ge0}b_jt^j,
\qquad
a=2M-\rho.
\]
By~\eqref{eq:QstarPsi}, we have
\begin{equation}\label{eq:QB}
\frac{Q_*(t)}{(1-t)^\rho}=(1+t^3)^{c_2}B(t).
\end{equation}
We claim that
\begin{equation}\label{eq:bj}
b_j>0\qquad\text{for all }j\ge a.
\end{equation}

Since $\Psi(t)=(1-t)^2+t$, we have
\begin{equation}\label{eq:Bexpand}
B(t)=\sum_{v=0}^{M}\binom Mv t^v(1-t)^{a-2v}.
\end{equation}

First let $j>a$. Consider the coefficient $b_j$. The summands on the right-hand side of~\eqref{eq:Bexpand} with $a-2v\ge0$ are polynomials of degree $a-v\le a$ and do not contribute to $b_j$. The summands with $a-2v<0$ expand as series with nonnegative coefficients. Finally, the summand corresponding to $v=M$ is
\[
t^M(1-t)^{-\rho}
\]
and gives a strictly positive contribution to $b_j$, since $j>a=2M-\rho>M$ implies $j-M>0$. Hence $b_j>0$ for $j>a$.

Now let $j=a$. The summand with $v=0$ in~\eqref{eq:Bexpand} contributes $(-1)^a$. All summands with $v\ge1$ and $a-2v\ge0$ have degree $a-v<a$, while those with $a-2v<0$ give nonnegative contributions. If $a$ is even, the contribution at $v=0$ is already $1$, and $b_a>0$.
Let $a$ be odd. Then $\rho$ is also odd, and $\rho\ge2$ implies $\rho\ge3$. Set
\[
v_0=\frac{a+1}{2}=M-\frac{\rho-1}{2}.
\]
In particular,~\eqref{eq:hardcase} implies $1\le v_0\le M-1$. Since $a-2v_0=-1$, the corresponding summand on the right-hand side of~\eqref{eq:Bexpand} is
\[
\binom M{v_0}t^{v_0}(1-t)^{-1},
\]
and its contribution to the coefficient of $t^a$ equals $\binom M{v_0}>1$. This strictly exceeds the only possible negative contribution, namely $-1$ from $v=0$. Thus~\eqref{eq:bj} is proved.

Finally, $q=c_2+2r=3c_2+2M$ implies
\[
\ii=q-\rho=3c_2+a.
\]
By~\eqref{eq:QB}, for every $j\ge\ii$ we have
\[
c_j(Q_*,\rho)
=
\sum_{s=0}^{c_2}\binom{c_2}{s}b_{j-3s}.
\]
Since $0\le s\le c_2$,
\[
j-3s\ge j-3c_2\ge\ii-3c_2=a,
\]
and therefore all the coefficients $b_{j-3s}$ occurring in the sum are positive by~\eqref{eq:bj}. Hence $c_j(Q_*,\rho)>0$ for every $j\ge\ii$. This proves~\eqref{eq:targetstar}, and together with the comparison above completes the proof.

\end{proof}

\begin{remark}\label{rem:rho1}
The condition $\rho\ge2$ is essential. For $\rho=1$ the statement is false: for
\[
Q(t)=\Phi_6(t)=1-t+t^2
\]
we have
\[
\frac{Q(t)}{1-t}=1+0\cdot t+t^2+t^3+t^4+\cdots,
\]
so the coefficient of $t$ is zero. However, in Theorem~\ref{thm:main} we apply Lemma~\ref{lem:coeff} with $\rho=n+1\ge2$.
\end{remark}

We now prove the main theorem.

\begin{proof}[Proof of Theorem~\ref{thm:main}]
Set $\rho=n+1$. By Lemma~\ref{lem:Qpoly} and formulas~\eqref{eq:hilbQ} and~\eqref{eq:q}, we have
\[
\dim_{\C}A_{\iota_X}=c_{q-\rho}(Q,\rho).
\]
By~\eqref{eq:rhoq} and Lemma~\ref{lem:coeff}, the right-hand side is positive. Therefore, by~\eqref{eq:hodgegraded},
\[
h^{0,n}(X)=\dim_{\C}A_{\iota_X}>0,
\]
which completes the proof.
\end{proof}

\begin{remark}[{cf.~\cite[Corollary 3.9]{Prz2022}}]
From $A_{\iota_X}\ne0$ it follows that there exist positive integers $\beta_0,\ldots,\beta_N$ such that
\[
\sum_{j=1}^k d_j=\sum_{i=0}^N\beta_i a_i.
\]
Indeed, $A_{\iota_X}\ne0$ implies $S_{\iota_X}\ne0$, so there is a monomial $x_0^{\alpha_0}\cdots x_N^{\alpha_N}$ of degree $\iota_X$. Setting $\beta_i=\alpha_i+1$, we obtain the required equality.
\end{remark}

\section{Consequences of Theorem 1.1}

In this section we derive several consequences of Theorem~\ref{thm:main} and its proof.
Let $X\subset\PP(a_0,\ldots,a_N)$ be a smooth well formed weighted complete intersection of general type of multidegree $(d_1,\ldots,d_k)$ and dimension $n$.

Set
\[
c_2=
\left|\{j: 2\mid d_j\}\right|
-
\left|\{i: 2\mid a_i\}\right|.
\]
Recall that Proposition~\ref{prop:divisibility} gives $c_2\ge0$. In the notation of~\eqref{eq:factor}, we have $q=c_2+2r$.

\begin{corollary}\label{cor:pgbound}
One has
\[
p_g(X)\ge
\begin{cases}
\displaystyle\binom{\iota_X+n}{n}, & r\le c_2,\\[7pt]
\displaystyle\binom{\iota_X+n-r+c_2}{n-r+c_2}, & 1\le r-c_2\le n,\\[7pt]
n+1, & r=c_2+n+1,\\[4pt]
2^{\max\{c_2,1\}}, & r\ge c_2+n+2.
\end{cases}
\]
\end{corollary}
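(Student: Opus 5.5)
The plan is to take the lower bound directly from the proof of Lemma~\ref{lem:coeff}. By \eqref{eq:hodgegraded}, \eqref{eq:hilbQ} and \eqref{eq:q} we have $p_g(X)=c_{\ii}(Q,\rho)$ with $\rho=n+1$ and $\ii=q-\rho=\iota_X$. The comparison step in that proof gives
\[
c_{\ii}(Q,\rho)\ge c_{\ii}(Q_*,\rho),\qquad Q_*(t)=(1+t)^{c_2}(1-t+t^2)^r .
\]
So it suffices to bound the coefficient of $t^{\iota_X}$ in $Q_*(t)/(1-t)^{n+1}$ from below. I would do this separately in each of the four ranges of $M=r-c_2$ already treated in the proof of Lemma~\ref{lem:coeff}, keeping track of sizes rather than only signs.

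\emph{Case $r\le c_2$.} Here $Q_*(t)=(1+t)^{c_2-r}(1+t^3)^r$ has nonnegative coefficients and constant term $1$. Hence the coefficient of $t^{\iota_X}$ is at least the coefficient of $t^{\iota_X}$ in $(1-t)^{-(n+1)}$, namely $\binom{\iota_X+n}{n}$.

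\emph{Case $1\le M\le n$, i.e.\ $\rho>M$.} I would use the factorization \eqref{eq:rhoM}. Each of the three factors $(1+t^3)^{c_2}$, $\bigl(\Psi(t)/(1-t)\bigr)^M=(1+t^2+t^3+\cdots)^M$ and $(1-t)^{-(\rho-M)}$ has nonnegative coefficients and constant term $1$. Choosing $1$ from the first two factors bounds the coefficient below by the coefficient of $t^{\iota_X}$ in $(1-t)^{-(n+1-M)}$, which is $\binom{\iota_X+n-M}{n-M}$. This is the claimed bound.

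\emph{Case $M=n+1=\rho$.} Here $\iota_X=3c_2+M$. I would take $t^{3c_2}$ from $(1+t^3)^{c_2}$ and then need the coefficient of $t^M$ in $(1+t^2+t^3+\cdots)^M$. Since $M\ge2$, each of the $M$ choices where one factor contributes $t^M$ and the others contribute $1$ is a distinct term. This gives at least $M=n+1$.

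\emph{Case $M\ge\rho+1$.} This is the hard case \eqref{eq:hardcase}, with $\iota_X=3c_2+a$. By \eqref{eq:QB} the coefficient equals $\sum_{s}\binom{c_2}{s}b_{\iota_X-3s}$. All indices satisfy $\iota_X-3s\ge a$, so by \eqref{eq:bj} each $b_{\iota_X-3s}$ is a positive integer, and the sum is at least $\sum_s\binom{c_2}{s}=2^{c_2}$.

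This settles the bound except when $c_2=0$, where the claim $2^{\max\{c_2,1\}}=2$ requires $b_a\ge2$. I expect this to be the main point needing care, and I would refine the analysis of $b_a$ via \eqref{eq:Bexpand}.

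\emph{Subcase $a$ even.} The term $v=0$ contributes $1$. Take also $v=a/2+1=M-\rho/2+1$, which satisfies $v\le M$ because $\rho\ge2$. Its summand is $\binom Mv t^v(1-t)^{-2}$, contributing $\binom Mv(a-v+1)\ge1$ to $b_a$. All other contributions are nonnegative, so $b_a\ge2$.

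\emph{Subcase $a$ odd.} The proof of Lemma~\ref{lem:coeff} already gives $b_a\ge\binom M{v_0}-1$ with $1\le v_0\le M-1$. Hence $b_a\ge M-1$. Since $M\ge\rho+1\ge4$, this gives $b_a\ge3\ge2$.

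In all cases the bound $p_g(X)\ge 2^{\max\{c_2,1\}}$ follows, which completes the plan.
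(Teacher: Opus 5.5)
Your proposal is correct and follows the paper's proof essentially step by step. It uses the same comparison $c_{\iota_X}(Q,n+1)\ge c_{\iota_X}(Q_*,n+1)$ and the same four ranges of $M=r-c_2$. For $c_2=0$ in the last range, it uses the same refinement $b_a\ge 2$: the summand $v=a/2+1$ when $a$ is even, and $v_0=(a+1)/2$ when $a$ is odd. In the even subcase, you could add the one-line check that $a-v=a/2-1\ge 1$ (since $a>M\ge 3$), which confirms that this summand actually contributes to the coefficient of $t^a$.
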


\begin{proof}
Set $M=r-c_2$. By~\eqref{eq:hodgegraded}, \eqref{eq:hilbQ}, \eqref{eq:rhoq}, and the comparison obtained after~\eqref{eq:Qstarseries} in the proof of Lemma~\ref{lem:coeff}, we have
\[
p_g(X)=h^{0,n}(X)=c_{\iota_X}(Q,n+1)
\ge c_{\iota_X}(Q_*,n+1),
\]
where
\[
Q_*(t)=(1+t)^{c_2}(1-t+t^2)^r.
\]

First let %$M\le0$. Then 
$c_2\ge r$. Then~\eqref{eq:Psi} gives
\[
Q_*(t)=(1+t)^{c_2-r}(1+t^3)^r.
\]
All coefficients of this polynomial are nonnegative, and its constant term is $1$. Hence the coefficient of $t^{\iota_X}$ in the series $Q_*(t)/(1-t)^{n+1}$ is at least the coefficient of the same degree in $(1-t)^{-n-1}$. Therefore
\[
p_g(X)\ge\binom{\iota_X+n}{n}.
\]

Let $1\le M\le n$. From~\eqref{eq:rhoM} we obtain
\[
\frac{Q_*(t)}{(1-t)^{n+1}}
=
(1+t^3)^{c_2}
(1+t^2+t^3+\cdots)^M
(1-t)^{-(n+1-M)}.
\]
The first two factors have nonnegative coefficients and constant term $1$. Therefore
\[
p_g(X)\ge\binom{\iota_X+n-M}{n-M}=\binom{\iota_X+n-r+c_2}{n-r+c_2}.
\]

Let $M=n+1$. This is the case $\rho=M$ in the proof of Lemma~\ref{lem:coeff}; there we obtained
\[
\iota_X=3c_2+M.
\]
The coefficient of $t^M$ in $(1+t^2+t^3+\cdots)^M$ is at least $M$: in each of the $M$ factors one can choose the term $t^M$, and choose the constant terms in all the others. Together with the term $t^{3c_2}$ from $(1+t^3)^{c_2}$, this gives
\[
p_g(X)\ge M=n+1.
\]

Finally, let $M\ge n+2$. Then~\eqref{eq:hardcase} holds. In the notation of~\eqref{eq:QB}--\eqref{eq:Bexpand},
\[
a=2M-n-1,
\]
and at the end of the proof of Lemma~\ref{lem:coeff} we obtained
\[
c_{\iota_X}(Q_*,n+1)
=
\sum_{s=0}^{c_2}\binom{c_2}{s}
 b_{a+3(c_2-s)},
\]
and by the inequalities in~\eqref{eq:bj}, all $b_j$ with $j\ge a$ are positive integers. If $c_2>0$, then
\[
p_g(X)\ge
\sum_{s=0}^{c_2}\binom{c_2}{s}
=2^{c_2}.
\]

Let $c_2=0$. Then $p_g(X)\ge b_a$. If $a$ is odd, inequality~\eqref{eq:bj} in Lemma~\ref{lem:coeff} gives
\[
b_a\ge
\binom{M}{(a+1)/2}-1
\ge M-1\ge2.
\]
If $a$ is even, the summand with $v=0$ in~\eqref{eq:Bexpand} contributes $1$, while the summand with $v=a/2+1$ gives one more positive contribution. Hence $b_a\ge2$ in this case as well. Therefore
{\renewcommand{\qedsymbol}{\openbox}
\begin{equation*}
p_g(X)\ge2^{\max\{c_2,1\}}.\qedhere
\end{equation*}
}
\end{proof}

\begin{corollary}\label{cor:pgone}
Assume that $X$ is not a numerical intersection with a linear cone, that is, $d_j\ne a_i$ for all $i,j$. Then
\[
p_g(X)=1
\]
if and only if
\[
X=X_{6,6}\subset\PP(1,2,2,3,3).
\]
In particular, in all other cases
\[
p_g(X)\ge2.
\]
\end{corollary}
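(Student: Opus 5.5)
The plan is to feed Corollary~\ref{cor:pgbound} into a case analysis. We look for the cases in which the lower bound can equal $1$, and then show that these force $X_{6,6}\subset\PP(1,2,2,3,3)$.

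First, the bound. For $r\le c_2$ the bound $\binom{\iota_X+n}{n}$ is at least $n+1\ge2$, since $\iota_X\ge1$ and $n\ge1$. When $1\le M=r-c_2\le n$ the bound is $\binom{\iota_X+n-M}{n-M}$, and it equals $1$ only when $M=n$. The case $M=n+1$ gives at least $n+1\ge2$, and $M\ge n+2$ gives at least $2$. So $p_g(X)=1$ forces $r=c_2+n$.

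Next, this equality becomes a constraint on the weights and degrees. We have $q=c_2+2r=3c_2+2n=\iota_X+n+1$, so $\iota_X=3c_2+n-1$. Now refine the estimate. In the case $M=n$ we have
\[
\frac{Q_*(t)}{(1-t)^{n+1}}=(1+t^3)^{c_2}(1+t^2+t^3+\cdots)^n(1-t)^{-1}.
\]
Its coefficient of $t^{\iota_X}$ counts, with multiplicity, the ways to write $\iota_X$ from these factors. If $c_2\ge1$, choosing $t^3$ from one factor of $(1+t^3)^{c_2}$ and suitable terms from the rest already gives extra contributions. For $n\ge2$, the degree $\iota_X=3c_2+n-1\ge2$ can be reached both as $t^{\iota_X}$ from the geometric series and as $t^2\cdot t^{\iota_X-2}$. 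Either situation gives $p_g\ge2$. The remaining case is $c_2=0$ and $n=1$, which gives $\iota_X=0$ and contradicts general type. So the bound from $Q_*$ never equals $1$ when $M=n$, apart from cases where the count is still $\ge2$.

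Hence $p_g(X)=1$ requires strict use of the original $Q$ rather than $Q_*$. Since $c_j(Q)\ge c_j(Q_*)$, we need $c_{\iota_X}(Q_*)\le1$, and the enumeration above shows this fails. Therefore I would redo the count to find exactly where the bound is sharp. I expect the main obstacle to be here: the degree bookkeeping must be checked carefully, since some of the extra terms may be killed, e.g.\ $t^2\cdot t^{\iota_X-2}$ requires $\iota_X-2\ge0$.

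After isolating the few numerical cases with $c_{\iota_X}(Q_*,n+1)=1$ (I expect only $n=2$, $c_2=0$, $\iota_X=1$ survives), the last step is arithmetic. We use that $Q$ is a product of cyclotomic polynomials with $c_2=0$ and $r=n=2$, so $q=4$ and $Q=\Phi_m\Phi_{m'}$ or a similar product of degree $4$. We also need $c_{1}(Q,3)=1$, i.e.\ $Q'(0)+3=1$, which gives $Q'(0)=-2$. This singles out $Q=\Phi_3^{\,?}$-type products; concretely $Q=\Phi_6^2$, up to checking alternatives such as $\Phi_{10}$, $\Phi_{12}$, $\Phi_5$ and $\Phi_8$.

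Finally we translate $Q=\Phi_6^2$ back into weights and degrees. Using Proposition~\ref{prop:divisibility}, together with the condition $d_j\ne a_i$ and $\iota_X=1$, the factorization must come from degrees $6,6$ over weights $2,2,3,3$ and a single weight $1$. This is $X_{6,6}\subset\PP(1,2,2,3,3)$, and we then confirm $p_g=1$ there directly.
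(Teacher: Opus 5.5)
Your overall route is the paper's: Corollary~\ref{cor:pgbound} forces $r=c_2+n$, so $\iota_X=3c_2+n-1$, and you then study the coefficient of $t^{\iota_X}$ in $(1+t^3)^{c_2}(1+t^2+t^3+\cdots)^n(1-t)^{-1}$. But the middle step, the one that actually isolates the exceptional case, is wrong as written.

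\textbf{The gap.} You assert that for $n\ge2$ one has $\iota_X=3c_2+n-1\ge2$. This fails for $c_2=0$, $n=2$, where $\iota_X=1$. So your case split ($c_2\ge1$; $n\ge2$; $c_2=0,n=1$) silently drops exactly the case $c_2=0$, $n=2$. The conclusions you draw from it are false: that the $Q_*$-bound never equals $1$ when $M=n$, and that $p_g(X)=1$ would need $c_{\iota_X}(Q,n+1)>c_{\iota_X}(Q_*,n+1)$. Taken literally they give $p_g(X)\ge2$ for all $X$, which contradicts the statement; for $X_{6,6}$ one in fact has $Q=Q_*=\Phi_6^2$. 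Your closing guess that only $n=2$, $c_2=0$, $\iota_X=1$ survives is correct, but nothing in the proposal proves it.

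\textbf{The fix.} Write $F(t)=(1+t^3)^{c_2}(1+t^2+t^3+\cdots)^n=1+0\cdot t+nt^2+\cdots$; it has nonnegative coefficients. The coefficient of $t^{\iota_X}$ in $F(t)/(1-t)$ is the sum of the coefficients of $F$ in degrees $0,\dots,\iota_X$. Hence it is at least $1+n\ge2$ once $\iota_X\ge2$. Thus $p_g(X)=1$ forces $\iota_X=1$, i.e.\ $3c_2+n=2$, so $c_2=0$, $n=2$, $r=2$, $q=4$.

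\textbf{Identifying $Q$.} From there your argument is valid and differs mildly from the paper's. You use $p_g(X)=c_1(Q,3)=Q'(0)+3$, so $Q'(0)=-2$, and run through the degree-$4$ products of $\Phi_m$ with $m\ge3$. Besides $\Phi_5,\Phi_8,\Phi_{10},\Phi_{12}$, you must also list the products of two of $\Phi_3,\Phi_4,\Phi_6$. Since the coefficient of $t$ in $\Phi_m$ is $-\mu(m)\in\{0,\pm1\}$ (M\"obius function), only $\Phi_6^2$ has $Q'(0)=-2$. The paper instead reads off $p_g(X)=e_1-1$ from~\eqref{eq:double}, so $\lambda_1+\lambda_2=2$. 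AM--GM together with $\lambda_1\lambda_2\ge1$ from~\eqref{eq:lambdaprod} then gives $\lambda_1=\lambda_2=1$. Your enumeration is more hands-on; the paper's argument avoids listing cyclotomic polynomials.

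\textbf{Recovering the weights and degrees.} The passage from $Q=\Phi_6^2$ to $(d_1,d_2)=(6,6)$, $(a_i)=(1,2,2,3,3)$ does not really use Proposition~\ref{prop:divisibility}. What is needed is that $\prod_j(1-t^{d_j})/\prod_i(1-t^{a_i})$ determines both multisets when no $d_j$ equals an $a_i$. One proves this by comparing multiplicities of primitive roots of unity of the largest order and inducting. This is the content of \cite[Lemma~5.5.7]{PSbook}, which the paper uses.
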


\begin{proof}
Assume that $p_g(X)=1$. By Corollary~\ref{cor:pgbound}, we have $r-c_2=n$, so~\eqref{eq:rhoM} implies
\[
\frac{Q_*(t)}{(1-t)^{n+1}}
=
\frac{(1+t^3)^{c_2}(1+t^2+t^3+\cdots)^n}{1-t}.
\]
Moreover,~\eqref{eq:q}, \eqref{eq:rhoq}, \eqref{eq:factor}, and $r=c_2+n$ give
\[
\iota_X=3c_2+n-1.
\]
From the inequality $c_{\iota_X}(Q,n+1)\ge c_{\iota_X}(Q_*,n+1)$ and inequality~\eqref{eq:targetstar}, we obtain
\[
1=p_g(X)\ge c_{\iota_X}(Q_*,n+1)>0,
\]
hence
\[
c_{\iota_X}(Q_*,n+1)=1.
\]

Set
\[
F(t)=(1+t^3)^{c_2}(1+t^2+t^3+\cdots)^n.
\]
All coefficients of the series $F(t)$ are nonnegative, its constant term is $1$, and the coefficient of $t^2$ is $n>0$. The coefficient of $t^{\iota_X}$ in $F(t)/(1-t)$ is the sum of the coefficients of the series $F(t)$ in degrees from $0$ to $\iota_X$. Hence, if $\iota_X\ge2$, it would be greater than $1$. Thus
\[
\iota_X=1.
\]
Hence,
\[
3c_2+n=2.
\]
Since $c_2\ge0$ and $n>0$, we obtain
\[
c_2=0,
\qquad
n=2.
\]
Thus,
\[
r=M=2,
\qquad
q=4.
\]

Formula~\eqref{eq:factor} becomes
\[
Q(t)=
\bigl((1-t)^2+\lambda_1t\bigr)
\bigl((1-t)^2+\lambda_2t\bigr).
\]
Considering the coefficient of $t$ on the right-hand side of~\eqref{eq:double}, we obtain
\[
p_g(X)=e_1-1.
\]
Since $p_g(X)=1$,
\[
e_1=\lambda_1+\lambda_2=2.
\]
On the other hand, by~\eqref{eq:lambdaprod},
\[
e_2=\lambda_1\lambda_2\ge 1. %=Q(1)\in\Z_{>0}.
\]
Since
\[
2=e_1=\lambda_1+\lambda_2
\ge2\sqrt{\lambda_1\lambda_2}=2\sqrt{e_2}\ge 2,
\]
one has %$e_2\le1$. Therefore
\[
e_2=1,
\qquad
\lambda_1=\lambda_2=1.
\]
Therefore
\[
Q(t)=(1-t+t^2)^2=\Phi_6(t)^2.
\]
By~\eqref{eq:hilbQ},
\[
\sum_{m\ge0}\dim_{\C}A_m\,t^m
=
\frac{\Phi_6(t)^2}{(1-t)^3}
=
\frac{(1-t^6)^2}
{(1-t)(1-t^2)^2(1-t^3)^2}.
\]

Since $X$ is not a numerical intersection with a linear cone, $d_j\ne a_i$ for all $i,j$. Thus $\mbox{by~\cite[Lemma~5.5.7]{PSbook}}$, we have
\[
(d_1,d_2)=(6,6),
\qquad
(a_0,\ldots,a_4)=(1,2,2,3,3).
\]
Hence,
\[
X=X_{6,6}\subset\PP(1,2,2,3,3).
\]

Conversely, if $X=X_{6,6}\subset\PP(1,2,2,3,3)$ is smooth, then the Hilbert series above gives $p_g(X)=1$. Smooth members of this family exist by~\cite[Corollary~3.8.16]{PSbook}.
\end{proof}

We conclude with an application in which the general type assumption is no longer imposed.

\begin{corollary}[{Ambro--Kawamata effective non-vanishing; cf.~\cite[Conjecture~2.1]{Kawamata}}]
\label{cor:AmbroKawamata}
Let $X$ be a smooth well formed weighted complete intersection, let $H$ be a nef Cartier divisor on $X$, and let $\Delta$ be an effective $\mathbb R$-divisor such that $(X,\Delta)$ is klt. If
\[
H-K_X-\Delta
\]
is nef and big, then
\[
H^0(X,\cO_X(H))\ne0.
\]
In particular, the Ambro--Kawamata effective non-vanishing conjecture holds for smooth well formed weighted complete intersections.
\end{corollary}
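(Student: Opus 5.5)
We reduce the statement to Lemma~\ref{lem:coeff}. The first step is to describe the Cartier divisors on $X$. If $n\ge3$, then by the Grothendieck--Lefschetz type result for smooth well formed weighted complete intersections (see~\cite{PSbook}), $\operatorname{Pic}(X)\simeq\Z$. Since $X$ is smooth, every $\cO_X(m)$ is then invertible, so $H\sim\cO_X(h)$ for some integer $h$. The low-dimensional cases need separate treatment. For $n=1$ the conjecture is classical: on a curve, $\deg(H-K_X-\Delta)>0$ gives $\deg H>2g-2$, and Riemann--Roch then produces a section. For $n=2$ the conjecture is known for surfaces (Kawamata), so we may cite it. Alternatively, we would check directly that the argument below only uses numerical classes of $H$ and $K_X$ proportional to $\cO_X(1)$; if $\operatorname{Pic}$ is larger, this needs the known surface case.

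Next, we translate the hypotheses. Write $H=\cO_X(h)$. Nefness of $H$ gives $h\ge0$. Since $\Delta$ is effective and $H-K_X-\Delta$ is nef and big, intersecting with $\cO_X(1)^{n-1}$ gives
\[
(h-\iota_X)\,\cO_X(1)^n>\Delta\cdot\cO_X(1)^{n-1}\ge0 .
\]
Hence $h>\iota_X$, that is, $h\ge\iota_X+1$. Together with $h\ge0$ this gives
\[
h\ge\max\{0,\iota_X+1\}.
\]

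It then suffices to show that $\dim_\C A_h>0$, since $H^0(X,\cO_X(h))=A_h$ by~\cite[Corollary~5.1.13(i)]{PSbook}. By~\eqref{eq:hilbQ}, we have $\dim A_h=c_h(Q,n+1)$, where $Q$ is the cyclotomic product of Lemma~\ref{lem:Qpoly}. Its degree is $q=\iota_X+n+1$ by~\eqref{eq:q}, and this formula holds without the general type assumption. Set $\rho=n+1$, so that $q-\rho=\iota_X$ and the required index satisfies $h\ge q-\rho+1$. We split into cases.

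\begin{itemize}
\item If $2\le\rho<q$, that is, $\iota_X>0$, then Lemma~\ref{lem:coeff} gives $c_j>0$ for all $j\ge\iota_X$, and in particular for $j=h$.
\item If $q\le\rho$, that is, $\iota_X\le0$, the lemma is not stated for this range, so we argue directly.
  \begin{itemize}
  \item If $q=\rho$, then $h\ge1$. Comparing with $Q_*$ as in the proof of Lemma~\ref{lem:coeff}: the comparison argument through~\eqref{eq:double} works verbatim for $j\ge q-\rho$. Running the case analysis with $M\le\rho$ gives positivity for all $j\ge0$ when $\rho>M$. In the remaining case $\rho=M$ we have $\iota_X=3c_2+M>0$, a contradiction.
  \item If $q<\rho$, the exponent $\ii-u-2v$ is always negative. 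All terms of~\eqref{eq:double} are then nonnegative series, and the $u=v=0$ term $(1-t)^{-(\rho-q)}$ is strictly positive in every degree, so every $c_j(Q,\rho)>0$.
  \end{itemize}
\end{itemize}

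The main obstacle is the Picard group step: justifying that $H$ is a multiple of $\cO_X(1)$, and making the $n\le2$ cases rigorous. The most delicate of these is $n=2$, where $\operatorname{Pic}(X)$ may be larger. There we would cite the known two-dimensional case of the conjecture rather than attempt a Hilbert series argument. A smaller point is checking that the comparison in the proof of Lemma~\ref{lem:coeff} remains valid when $\ii=q-\rho\le0$. It should, since each term then has a negative exponent of $(1-t)$ or is a polynomial of degree $<\ii\le j$.
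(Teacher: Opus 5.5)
Your proof is correct. For $n\ge3$ it follows the paper's skeleton: $\operatorname{Pic}(X)$ is generated by $\cO_X(1)$, the hypotheses reduce to $h>\iota_X$, and then $\dim A_h=c_h(Q,n+1)$ is handled by Lemma~\ref{lem:coeff}. As in the paper, $n\le2$ is treated classically or by Kawamata. Where you genuinely differ is the Fano and Calabi--Yau cases.

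The paper disposes of $\iota_X\le0$ by citing \cite[Corollary~5.11]{PST17}. You instead push the expansion~\eqref{eq:double} into the range $q\le\rho$, and this works.
\begin{itemize}
\item For $q<\rho$, every exponent $\ii-u-2v$ is negative, so all summands are nonnegative series. The $(u,v)=(0,0)$ summand $(1-t)^{q-\rho}$ is positive in every degree. Here you need neither Maclaurin nor~\eqref{eq:lambdaprod}, only $\lambda_\nu>0$.
\item For $q=\rho$, the comparison with $Q_*$ holds for all $j\ge0$, since $\ii=0$.
\end{itemize}

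One small gap in the Calabi--Yau case: you rule out $\rho=M$ but never mention the subcase $\rho<M$. Both are excluded at once by $\rho=q=3c_2+2M>M$.

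The Calabi--Yau case is even quicker directly. When $q=\rho$, every summand of~\eqref{eq:double} with $(u,v)\ne(0,0)$ has negative exponent. The summand $(1,0)$ or $(0,1)$ is positive in all degrees $\ge1$, and at least one of them exists because $c_2+2r=q\ge2$.

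What your route buys is a self-contained, uniform argument for smooth $X$ that rests only on Lemma~\ref{lem:Qpoly}. In particular, it shows $A_h\ne0$ for every $h\ge0$ on a smooth Fano weighted complete intersection. The paper's citation is shorter, and rests on a result valid more generally for quasismooth Fano and Calabi--Yau weighted complete intersections.

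Finally, when you invoke the Lefschetz-type theorem, use its full content: $\operatorname{Pic}(X)$ is generated by $\cO_X(1)$, as in \cite[Proposition~2.3]{PST17}. The facts that $\operatorname{Pic}(X)\simeq\Z$ and that $\cO_X(1)$ is invertible would not by themselves give $H\sim\cO_X(h)$.
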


\begin{proof}
The cases $n=0$ and $n=1$ are obvious, while the case $n=2$ is proved in~\cite[Theorem~3.1]{Kawamata}. Thus we may assume that $n\ge3$. The Fano and Calabi--Yau cases follow from~\cite[Corollary~5.11]{PST17}. Thus we may assume that $X$ is of general type. Let $L$ be an ample Cartier divisor on $X$ such that
$\cO_X(L)\simeq\cO_X(1)$. Since $X$ is smooth, by~\cite[Proposition~2.3]{PST17} we have
$\operatorname{Pic}(X)=\mathbb Z[L]$. By adjunction, $K_X\sim\iota_XL$.
Write $H\sim hL$ for some $h\in\mathbb Z_{\ge0}$. We have
$\Delta\equiv bL$ for some $b\ge0$. Since
\[
H-K_X-\Delta\equiv(h-\iota_X-b)L
\]
is nef and big, we have
\[
h-\iota_X-b>0,
\]
and in particular $h>\iota_X$.

The Hilbert series formula~\eqref{eq:hilbQ}, with
\[
\rho=n+1,\qquad q=\iota_X+n+1,
\]
and Lemma~\ref{lem:coeff} give
\[
h^0(X,\cO_X(H))=\dim_{\mathbb C}A_h>0,
\]
which implies the assertion of the corollary.\qedhere
\end{proof}

\begin{remark}
Effective non-vanishing for weighted complete intersections was previously established by Pizzato, Sano, and Tasin for quasismooth Fano and Calabi--Yau weighted complete intersections and for quasismooth weighted hypersurfaces~\cite{PST17}, by Jiang and Yu in codimension~2~\cite{JiangYu}, and by Passantino in codimension at most~3~\cite{Passantino}.
\end{remark}

\medskip
\noindent\textbf{Acknowledgements.} The author is grateful to Constantin Shramov for useful discussions.

\medskip
\noindent\textbf{Use of AI.} During the preparation of this paper, the author used AI as an auxiliary tool. The final mathematical verification of the proofs and the writing of the paper were carried out independently by the author.


\begin{thebibliography}{PSh26}

\bibitem[Ca80]{Carlson}
J.\,A.\,Carlson,
\emph{Extensions of mixed Hodge structures},
Journ\'ees de G\'eom\'etrie Alg\'ebrique d'Angers, 1979,
107--127, Sijthoff \& Noordhoff, 1980.

\bibitem[CS10]{CartwrightSteger}
D.\,Cartwright, T.\,Steger,
\emph{Enumeration of the 50 fake projective planes},
C. R. Math. Acad. Sci. Paris 348 (2010), 11--13.

\bibitem[HLP52]{HLP}
G.\,H.\,Hardy, J.\,E.\,Littlewood, G.\,P\'olya,
\emph{Inequalities}, 2nd ed., Cambridge University Press, 1952, Theorem~52.


\bibitem[JY24]{JiangYu}
C.\,Jiang, P.\,Yu,
\emph{Effective nonvanishing for weighted complete intersections of codimension two},
to appear in Kyoto J. Math.; arXiv:2409.07828.

\bibitem[Ka00]{Kawamata}
Y.\,Kawamata,
\emph{On effective non-vanishing and base-point-freeness},
Asian J. Math. 4 (2000), no.~1, 173--182.


\bibitem[Mu79]{Mumford}
D.\,Mumford,
\emph{An algebraic surface with $K$ ample, $K^2=9$, $p_g=q=0$},
Amer. J. Math. 101 (1979), 233--244.

\bibitem[Pa25]{Passantino}
A.\,Passantino,
\emph{Effective non-vanishing for weighted complete intersections of low codimension},
arXiv:2501.13267v1 (2025).


\bibitem[PST17]{PST17}
M.\,Pizzato, T.\,Sano, L.\,Tasin,
\emph{Effective non-vanishing for Fano weighted complete intersections},
Algebra Number Theory 11 (2017), no.~10, 2369--2395.


\bibitem[Pr22]{Prz2022}
V.\,Przyjalkowski,
\emph{On Hodge level of weighted complete intersections of general type},
Sbornik: Mathematics 213:12 (2022), 1679--1694.

\bibitem[PSh20]{PSh2020}
V.\,Przyjalkowski, C.\,Shramov,
\emph{Hodge level for weighted complete intersections},
Collect. Math. 71 (2020), 549--574.

\bibitem[PSh26]{PSbook}
V.\,V.\,Przyjalkowski, C.\,C.\,Shramov,
\emph{Weighted Complete Intersections},
De Gruyter Expositions in Mathematics, vol.~79,
Walter de Gruyter, Berlin--Boston, 2026.

\bibitem[Ra72]{Rapoport}
M.\,Rapoport,
\emph{Compl\'ement \`a l'article de P.\,Deligne ``La conjecture de Weil pour les surfaces K3''},
Invent. Math. 15 (1972), 227--236.

\end{thebibliography}
\end{document}